\documentclass[11pt]{article}
\usepackage{amsfonts}
\usepackage{latexsym}
\usepackage{amsmath}
\usepackage{amssymb}
\usepackage{amsthm}
\usepackage{amsmath}
\usepackage{verbatim}
\usepackage{hyperref}
\usepackage{enumerate}
\usepackage{bibspacing}

\newtheorem{thm}{Theorem}[section]
\newtheorem*{thm*}{Theorem}

\newtheorem{lemma}[thm]{Lemma}

\newtheorem*{prop*}{Proposition}
\newtheorem{proposition}[thm]{Proposition}

\newtheorem*{conj*}{Conjecture}

\newtheorem*{dfn*}{Definition}
\theoremstyle{definition}

\newtheorem*{rmk*}{Remark}
\newtheorem*{fact*}{Fact}

\theoremstyle{proof}

\DeclareMathOperator{\Conv}{\textnormal{Conv}}

\newcommand{\Hess}{\text{Hess}}

\newcommand{\brac}[1]{\left(#1\right)}
\newcommand{\scalar}[1]{\left \langle #1 \right \rangle}
\newcommand{\sscalar}[1]{\langle #1 \rangle}

\newcommand{\R}{\mathbb{R}}

\newcommand{\E}{\mathbb{E}}
\renewcommand{\P}{\mathbb{P}}

\newcommand{\F}{\mathcal{F}}
\newcommand{\G}{\mathcal{G}}

\newcommand{\I}{\mathcal{I}}

\newcommand{\A}{\mathbf{A}}
\newcommand{\B}{\mathbf{B}}
\newcommand{\BL}{\mathcal{BL}}
\newcommand{\f}{\mathbf{f}}
\newcommand{\g}{\mathbf{g}}
\newcommand{\eps}{\epsilon}

\newcommand{\Q}{\mathbf{Q}}

\newcommand{\Id}{\textrm{Id}}

\numberwithin{equation}{section}

\numberwithin{equation}{section}

\setlength{\topmargin}{-0.5cm}
\setlength{\textwidth}{5.8in}  \setlength{\textheight}{8.4in} \setlength{\oddsidemargin}{0.3in}  \setlength{\evensidemargin}{0.3in}

\begin{document}

\renewcommand*{\thefootnote}{\fnsymbol{footnote}}

\author{Emanuel Milman\thanks{Technion Israel Institute of Technology, Department of Mathematics, Haifa 32000, Israel. Email: emilman@tx.technion.ac.il.}
}

\begingroup    \renewcommand{\thefootnote}{}    \footnotetext{2020 Mathematics Subject Classification: 52A40, 60E15, 60G15.}
    \footnotetext{Keywords: Gaussian correlation inequality, inverse Brascamp--Lieb inequality, even log-concave functions.}
    \footnotetext{The research leading to these results is part of a project that has received funding from the European Research Council (ERC) under the European Union's Horizon 2020 research and innovation programme (grant agreement No 101001677).}
\endgroup

\title{Gaussian Correlation via Inverse Brascamp--Lieb}

\date{\nonumber} 
\maketitle

\begin{abstract}
We give a simple alternative proof of Royen's Gaussian Correlation inequality by using (a slightly generalized version of) Nakamura--Tsuji's symmetric inverse Brascamp--Lieb inequality for even log-concave functions. We explain that this inverse inequality is in a certain sense a dual counterpart to the forward inequality of Bennett--Carbery--Christ--Tao and Valdimarsson, and that the log-concavity assumption therein cannot be omitted in general. 
\end{abstract}

\section{Introduction}

Our starting point in this note is the observation that centered Gaussians saturate the following correlation lower-bound. This phenomenon (for the corresponding upper bound) has its origins in the work of Brascamp and Lieb \cite{BrascampLieb-YoungInq}.
 Given a symmetric $n \times n$ matrix $A$, we denote $g_{A}(x) := \exp(-\frac{1}{2} \scalar{A x, x})$ for $x \in \R^n$; when $A > 0$ is positive-definite, this corresponds to a centered Gaussian (whose covariance, after normalizing $g_A$ to be a probability density, is $A^{-1}$). All functions below are assumed to be non-negative. Recall that a function $f : \R^n \rightarrow \R_+$ is called log-concave (respectively, log-convex) if $\log f : \R^n \rightarrow \R \cup \{-\infty\}$ is concave (respectively, convex). Let $n_1,\ldots,n_m$ be positive integers, set $N = \sum_{i=1}^m n_i$, and consider the orthogonal decomposition $\R^N = \oplus_{i=1}^m \R^{n_i}$. Given $x \in \R^N$, write $x = (x_1,\ldots,x_m)$ with $x_i \in \R^{n_i}$. 

\smallskip

\begin{thm}[Symmetric Inverse Brascamp--Lieb Inequality, after Nakamura--Tsuji \cite{NakamuraTsuji-InverseBrascampLieb}] \label{thm:intro-IBL}
Let $\bar Q$ be an $N \times N$ symmetric matrix (\textbf{of arbitrary signature}), and let $Q_i \geq 0$ denote positive semi-definite $n_i \times n_i$ matrices, for $i=1,\ldots,m$. Let $c_1,\ldots,c_m>0$. Then for all  (non-zero) \textbf{even log-concave} $h_i \in L^1(\R^{n_i},g_{Q_i}(x_i) dx_i)$, $i=1,\ldots,m$, we have:
\[
\frac{\int_{\R^N} e^{-\frac{1}{2}\scalar{\bar Q x,x}} \Pi_{i=1}^m h_i(x_i)^{c_i} dx}{\Pi_{i=1}^m \brac{\int_{\R^{n_i}} e^{-\frac{1}{2} \scalar{Q_i x_i, x_i}} h_i(x_i) dx_i}^{c_i} } \geq \inf_{\substack{ A_i \geq 0\\A_i+Q_i > 0} } \frac{\int_{\R^N} e^{-\frac{1}{2} \scalar{\bar Q x,x}} \Pi_{i=1}^m g_{A_i}(x_i)^{c_i} dx}{\Pi_{i=1}^m \brac{\int_{\R^{n_i}} e^{-\frac{1}{2} \scalar{Q_i x_i, x_i}} g_{A_i}(x_i) dx_i}^{c_i} } ,
\]
where the infimum is over $n_i \times n_i$ symmetric matrices $A_i$, $i=1,\ldots,m$. 
\end{thm}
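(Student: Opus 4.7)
The plan is to follow the heat-flow strategy of Nakamura--Tsuji, adapted to the present setting where $\bar Q$ may have arbitrary signature. First, by standard truncation and convolution with small Gaussians, I reduce to the case when each $h_i$ is smooth, strictly log-concave, even, and has rapid decay at infinity so that all integrals converge absolutely. I then fix admissible target matrices $A_i \geq 0$ with $A_i + Q_i > 0$ and aim to prove the inequality with the Gaussian right-hand side associated to this particular choice; taking the infimum at the end yields the stated result.

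For each $i$, I introduce an Ornstein--Uhlenbeck-type flow $t \mapsto h_i^{(t)}$ with $h_i^{(0)} = h_i$, preserving evenness and log-concavity for all $t \geq 0$, and converging as $t \to \infty$ to a scalar multiple of $g_{A_i}$. Concretely, $h_i^{(t)}$ is taken to solve an appropriate Fokker--Planck equation whose stationary density relative to the reference weight $g_{Q_i}$ is $g_{A_i}$; preservation of log-concavity along such flows follows from Pr\'ekopa--Leindler or the Brascamp--Lieb improvement thereof, and preservation of evenness is immediate from the even initial data.

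The main analytical step is to show that
\[
F(t) := \frac{\int_{\R^N} e^{-\frac{1}{2}\scalar{\bar Q x,x}} \prod_{i=1}^m \brac{h_i^{(t)}(x_i)}^{c_i} dx}{\prod_{i=1}^m \brac{\int_{\R^{n_i}} e^{-\frac{1}{2}\scalar{Q_i x_i,x_i}} h_i^{(t)}(x_i) dx_i}^{c_i}}
\]
is monotone non-increasing in $t$. Differentiating under the integral sign using the PDE satisfied by $h_i^{(t)}$ and integrating by parts, $F'(t)$ can be rewritten as a Brascamp--Lieb-type variance expression: the numerator-derivative contributes a covariance of certain vector fields with respect to the log-concave measure $e^{-\frac{1}{2}\scalar{\bar Q x,x}} \prod_i (h_i^{(t)})^{c_i}(x_i)\,dx$ on $\R^N$, while the denominator-derivatives contribute the analogous covariances on each $\R^{n_i}$ with respect to $e^{-\frac{1}{2}\scalar{Q_i x_i,x_i}} h_i^{(t)}(x_i)\,dx_i$. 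The required sign comes from the Brascamp--Lieb variance inequality for even log-concave measures, applied to the $\R^N$ density — whose log-concavity is the point at which $A_i + Q_i > 0$ is used, compensating for any negative eigenvalues of $\bar Q$ via the Hessians of $-\log h_i^{(t)}$. The \emph{even} hypothesis is critical: it forces the first-moment terms, which obstruct the analogous argument for general log-concave $h_i$, to vanish by symmetry. Sending $t \to \infty$ gives $F(0) \geq F(\infty)$, and $F(\infty)$ is precisely the Gaussian value associated to $(A_i)$; taking the infimum over admissible $A_i$ finishes the proof.

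The main obstacle will be the monotonicity step, and in two respects. First, one must choose the flow carefully so that it is globally defined, preserves the class of even log-concave functions, and limits to the desired Gaussian; the "even" constraint is not automatically stable under many natural flows and must be built in by symmetry. Second, the sign analysis of $F'(t)$ has to be carried out when $\bar Q$ has arbitrary signature, which means verifying log-concavity of the $\R^N$ weighted measure along the entire flow and justifying all integrations by parts in the absence of a globally integrable reference Gaussian. Both issues are genuinely where the even log-concavity hypothesis and the assumption $A_i + Q_i > 0$ enter.
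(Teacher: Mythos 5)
Your sketch is not the approach taken in the paper, and it contains a gap that is precisely the obstacle that motivated the approach actually used. The paper (following Nakamura--Tsuji \cite{NakamuraTsuji-InverseBrascampLieb}) does \emph{not} use a heat-flow monotonicity argument. Instead, the proof proceeds in four steps: (1) existence of a minimizer $\f$ over a compact class $\F_{\A,\B}$ via Arzel\`a--Ascoli; (2) a doubling inequality $\BL(\f)^2 \geq I_{\A,\B}\,\BL(\Conv \f)$ obtained by squaring the integral, passing to rotated coordinates $u=(x+y)/\sqrt 2$, $v=(x-y)/\sqrt 2$, and applying the definition of the infimum to the inner integral; (3) iterating the self-convolution and invoking the Central Limit Theorem to conclude that the minimizer may be taken Gaussian; and (4) a limiting argument in the class parameters $\A,\B$. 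No flow, no differentiation of a functional, and no variance inequality are used. The evenness hypothesis enters so that the iterated self-convolutions converge to \emph{centered} Gaussians rather than Gaussians with drift.

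The genuine gap in your proposal is in the monotonicity step. You want the sign of $F'(t)$ to follow from ``the Brascamp--Lieb variance inequality for even log-concave measures, applied to the $\R^N$ density,'' and you assert that $A_i + Q_i > 0$ supplies the needed log-concavity of that density. It does not. The joint density on $\R^N$ is $e^{-\frac12\scalar{\bar Q x,x}}\prod_i h_i^{(t)}(x_i)^{c_i}$, whose log-Hessian is $\bar Q + \oplus_i c_i\,\Hess(-\log h_i^{(t)})$; the hypotheses only give $\Hess(-\log h_i^{(t)}) \geq 0$, so when $\bar Q$ has negative eigenvalues this can fail to be positive semi-definite, and there is no mechanism in your argument to prevent that (the constraint $A_i+Q_i>0$ concerns the Gaussian comparison functions on the right-hand side, not the flowing $h_i^{(t)}$). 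This is exactly the obstruction that restricts the heat-flow proofs of Barthe--Wolff \cite{BartheWolff-InverseBrascampLieb} to the signature constraint $Q\leq 0$, and the paper points out that this constraint is incompatible with the Gaussian-correlation application. Moreover, the Brascamp--Lieb variance inequality bounds variances from \emph{above}; for an \emph{inverse} Brascamp--Lieb inequality a lower bound is what one would naively need, and you have not explained why the sign works out. Evenness kills first-moment terms, as you say, but it does not resolve the sign or the log-concavity of the joint density. In short, your plan reproduces the known heat-flow route, whose limitation to definite signature is precisely the reason the doubling/CLT route was adopted here; making the heat-flow approach work for arbitrary $\bar Q$ would itself be a new result and would require an argument you have not supplied.
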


Theorem \ref{thm:intro-IBL} was recently established for $Q_i = 0$ by Nakamura and Tsuji in \cite[Theorem 1.3]{NakamuraTsuji-InverseBrascampLieb}. The extended version for general $Q_i \geq 0$ above is obtained by a straightforward repetition of their proof, which we verify in Section \ref{sec:IBL}. 
We refer to \cite{NakamuraTsuji-InverseBrascampLieb} for a thorough discussion and comparison with the original inequalities of Brascamp--Lieb and Lieb \cite{BrascampLieb-YoungInq, Lieb-MultiDimBL} (see also Carlen--Lieb--Loss \cite{CarlenLiebLoss-EntropyOnSn}, Bennett--Carbery--Christ--Tao \cite{BCCT-BrascampLieb}), the reverse Brascamp--Lieb inequality of Barthe \cite{Barthe-ReverseBL-CRAS,Barthe-ReverseBL} (see also Barthe--Cordero-Erausquin \cite{BartheCordero-InverseBLviaSemiGroup}, Valdimarsson \cite{Valdimarsson-GenCaffarelli} and Barthe--Huet \cite{BartheHuet}), the inverse Brascamp--Lieb inequalities of Chen--Dafnis--Paouris \cite{CDP-ReverseHolder} and Barthe--Wolff \cite{BartheWolff-InverseBrascampLieb}, and the unified forward-reverse Brascamp--Lieb inequalities of Liu--Courtade--Cuff--Verd\'u \cite{LCCV-BrascampLieb,CourtadeLiu-BrascampLieb}. See also \cite{CarlenCordero-SubadditivityOfEntropy,BCLM-BrascampLiebInqs, Boroczky-BrascampLiebSurvey} and the references therein for further related results. 
When $Q_i=0$ and $\bar Q=0$, scale invariance clearly implies that one should assume $\sum_{i=1}^m c_i n_i = N$ in order for the left-hand-side to have any chance of being bounded away from $0$ and $\infty$, but this is not the case for general $Q_i$ or $\bar Q$. The above mentioned works treat more general surjective linear transformations $P_i : \R^N \rightarrow \R^{n_i}$ with $\cap_{i=1}^m \ker P_i = \{ 0\}$, but we will only require and treat the case when $P_i x := x_i$ for $x = (x_1,\ldots,x_m)$, as in \cite{NakamuraTsuji-InverseBrascampLieb}. Here and below $Q_1 \oplus \ldots \oplus Q_m$ denotes the block matrix $P^*_1 Q_1 P_1 +  \ldots + P^*_m Q_m P_m$. 

We remark that some additional partial cases of Theorem \ref{thm:intro-IBL} were previously known in the literature. Denoting 
\begin{equation} \label{eq:Q}
Q := \bar Q - c_1 Q_1 \oplus \ldots \oplus c_m Q_m,
\end{equation}
 the case that $Q=0$ and general $Q_i > 0$ without requiring any evenness is contained in \cite[Theorem 1.4 (2)]{Valdimarsson-GenCaffarelli} of Valdimarsson, and the case when $Q \leq 0$  without requiring evenness nor log-concavity of $h_i$ on the left but also without requiring that $A_i \geq 0$ on the right is contained in \cite[Theorem 1.4]{BartheWolff-InverseBrascampLieb} of Barthe--Wolff (see also Chen--Dafnis--Paouris \cite{CDP-ReverseHolder} for the particular ``geometric case" when in addition 
  $P_i \bar Q^{-1} P_i^* = Q_i^{-1}$ for all $i$ and $\bar Q > 0$); 
however, neither of these assumptions fits our purposes.

\subsection{Log-concavity assumption} 

A noteworthy feature of the Nakamura--Tsuji formulation is that, contrary to the inverse Brascamp--Lieb inequalities in \cite{CDP-ReverseHolder,BartheWolff-InverseBrascampLieb}, there is no restriction on the signature of $Q$. 
As explained in \cite{BartheWolff-InverseBrascampLieb,NakamuraTsuji-InverseBrascampLieb}, the price for this flexibility is that the test functions $f_i$ cannot be arbitrarily translated, and are therefore assumed to be even. In \cite{NakamuraTsuji-InverseBrascampLieb}, Nakamura and Tsuji further discuss whether the additional assumption of log-concavity is also natural (as it is absent from the formulations in \cite{CDP-ReverseHolder,BartheWolff-InverseBrascampLieb,NakamuraTsuji-HypercontractivityBeyondNelson,NakamuraTsuji-VolumeProductUnderHeatFlow}), and write that it is actually reasonable to expect that it could be omitted. 
However, we shall see below that Theorem \ref{thm:intro-IBL} would be false without the log-concavity assumption (at least, if one allows considering $Q_i > 0$).
Furthermore, the log-concavity assumption is actually natural, 
as we shall now attempt to explain. 

\smallskip

Given a positive semi-definite $Q_i \geq 0$, we shall say $f_i$ is more log-concave (respectively, more log-convex) than $g_{Q_i}$ if $f_i = g_{Q_i} h_i$ for some log-concave (respectively, log-convex) function $h_i$. We shall say that $f_i$ is generated by $g_{Q_i}$ (``of $Q_i$-type" in the terminology of \cite{BCCT-BrascampLieb}) if $f_i$ is obtained as the convolution $g_{Q_i} \ast \mu_i$ for some finite (non-zero) Borel measure $\mu_i$ (note that such $f_i$ is always smooth). For smooth and positive $f_i$, clearly $f_i$ is more log-concave (respectively, more log-convex) than $g_{Q_i}$ iff $\Hess (-\log f_i) \geq (\leq) \, Q_i$, and it is easy to check that if $f_i$ is generated by $g_{Q_i}$ then it is more log-convex than $g_{Q_i}$ (e.g.~\cite[Lemma 16]{ChewiPooladian-GenCaffarelli}).
Clearly, recalling the definition (\ref{eq:Q}) of $Q$ and setting $B_i := A_i + Q_i$ and $f_i = g_{Q_i} h_i$, Theorem \ref{thm:intro-IBL} may be equivalently reformulated as follows (note that the equivalence crucially relies on the signatures of $\bar Q$ and $Q$ being arbitrary):

\begin{thm}[Theorem \ref{thm:intro-IBL} again] \label{thm:intro-IBL2}
Let $Q$ be an $N \times N$ symmetric matrix (\textbf{of arbitrary signature}), and let $Q_i \geq 0$ denote positive semi-definite $n_i \times n_i$ matrices, for $i=1,\ldots,m$. Let $c_1,\ldots,c_m>0$. Then for all (non-zero) \textbf{even} $f_i \in L^1(\R^{n_i})$ which are \textbf{more log-concave than $g_{Q_i}$}, we have:
\[
\frac{\int_{\R^N} e^{-\frac{1}{2}\scalar{Q x,x}} \Pi_{i=1}^m f_i(x_i)^{c_i} dx}{\Pi_{i=1}^m \brac{\int_{\R^{n_i}} f_i(x_i) dx_i}^{c_i} } \geq \inf_{\substack{B_i \geq Q_i\\ B_i > 0}} \frac{\int_{\R^N} e^{-\frac{1}{2} \scalar{Q x,x}} \Pi_{i=1}^m g_{B_i}(x_i)^{c_i} dx}{\Pi_{i=1}^m \brac{\int_{\R^{n_i}} g_{B_i}(x_i) dx_i}^{c_i} } . 
\]
\end{thm}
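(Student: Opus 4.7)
The plan is to deduce Theorem \ref{thm:intro-IBL2} directly from Theorem \ref{thm:intro-IBL} by the change of variables already flagged in the excerpt: set $h_i := f_i / g_{Q_i}$ (equivalently, $f_i = g_{Q_i} h_i$) on the test-function side, and $B_i := A_i + Q_i$ on the Gaussian side. Since the infimum on the right-hand side of Theorem \ref{thm:intro-IBL} and the resulting relation are to be preserved, the entire argument is a bookkeeping check that the two displayed inequalities are literally the same after this reparametrization.

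First I would verify the correspondence of hypotheses. Evenness: $g_{Q_i}$ is even, so $f_i$ is even iff $h_i$ is. Log-concavity: $\log h_i = \log f_i + \tfrac{1}{2}\scalar{Q_i x_i,x_i}$ is concave iff $\log f_i$ is bounded above by a concave function minus $\tfrac{1}{2}\scalar{Q_i x_i,x_i}$, i.e.\ iff $f_i$ is more log-concave than $g_{Q_i}$. Integrability: $h_i \in L^1(\R^{n_i}, g_{Q_i}(x_i)\,dx_i)$ iff $f_i = g_{Q_i} h_i \in L^1(\R^{n_i})$. On the Gaussian side, the constraints $A_i \geq 0$ and $A_i + Q_i > 0$ translate exactly into $B_i \geq Q_i$ and $B_i > 0$.

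Next I would perform the algebraic identification of the integrals. For the test-function numerator, $\prod_i h_i(x_i)^{c_i} = \prod_i f_i(x_i)^{c_i} \exp(\tfrac{c_i}{2}\scalar{Q_i x_i,x_i})$, so
\[
e^{-\frac{1}{2}\scalar{\bar Q x,x}} \prod_i h_i(x_i)^{c_i} \;=\; e^{-\frac{1}{2}\scalar{(\bar Q - c_1 Q_1 \oplus \ldots \oplus c_m Q_m)x,x}} \prod_i f_i(x_i)^{c_i} \;=\; e^{-\frac{1}{2}\scalar{Q x,x}} \prod_i f_i(x_i)^{c_i},
\]
using the definition (\ref{eq:Q}) of $Q$. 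The test-function denominator is immediate: $\int g_{Q_i} h_i \,dx_i = \int f_i \,dx_i$. For the Gaussian numerator, $e^{-\frac{1}{2}\scalar{\bar Q x,x}} \prod_i g_{A_i}(x_i)^{c_i} = \exp(-\tfrac{1}{2}\scalar{(\bar Q + c_1 A_1 \oplus \ldots \oplus c_m A_m)x,x})$, and substituting $A_i = B_i - Q_i$ shows $\bar Q + \sum_i c_i (A_i)_{\oplus} = Q + \sum_i c_i (B_i)_{\oplus}$, so the numerator rewrites as $e^{-\frac{1}{2}\scalar{Q x,x}} \prod_i g_{B_i}(x_i)^{c_i}$. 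The Gaussian denominator likewise rewrites as $\prod_i (\int g_{B_i} dx_i)^{c_i}$.

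Thus the two displayed ratios coincide term-by-term, the parameter sets bijectively match, and Theorem \ref{thm:intro-IBL} applied to the $(h_i, A_i)$ is Theorem \ref{thm:intro-IBL2} applied to the corresponding $(f_i, B_i)$. There is no genuine obstacle; the only point worth flagging is the remark in the excerpt that the reparametrization is only legitimate because neither $\bar Q$ nor $Q$ is required to have a definite signature. Absorbing an arbitrary positive semi-definite form into (or extracting it from) the test functions would otherwise risk producing a divergent weight on one side or the other; here, since both sides are written with symmetric forms of arbitrary signature, the identity of integrands is unconditional and no convergence issue is introduced by the substitution itself (convergence is governed by the even log-concave test functions and the $B_i > 0$ constraint, exactly as in Theorem \ref{thm:intro-IBL}).
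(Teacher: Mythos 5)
Your reparametrization $f_i = g_{Q_i} h_i$, $B_i = A_i + Q_i$ is correct, and it is exactly the remark the paper makes in passing (``Clearly, recalling the definition (\ref{eq:Q}) of $Q$ and setting $B_i := A_i + Q_i$ and $f_i = g_{Q_i} h_i$\ldots'') to justify calling Theorem \ref{thm:intro-IBL2} ``Theorem \ref{thm:intro-IBL} again.'' But this establishes only the \emph{equivalence} of the two formulations; it proves neither. The gap is that Theorem \ref{thm:intro-IBL} with general $Q_i \geq 0$ is not an available black box: the only version in the literature (Nakamura--Tsuji) is the case $Q_i = 0$, and the paper establishes the general case precisely by proving Theorem \ref{thm:intro-IBL2} directly in Section \ref{sec:IBL} and then transporting it back to Theorem \ref{thm:intro-IBL} via this same change of variables (the paper flags this explicitly: ``we will use the formulation of Theorem \ref{thm:intro-IBL2} rather than the one of the equivalent Theorem \ref{thm:intro-IBL}''). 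Deriving Theorem \ref{thm:intro-IBL2} by citing Theorem \ref{thm:intro-IBL} is therefore circular.

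Nor can you salvage the argument by invoking only the known $Q_i = 0$ case together with monotonicity. Passing from $Q_i = 0$ to $Q_i > 0$ restricts the class of test functions $f_i$ (which helps the left-hand side), but it also shrinks the admissible Gaussian parameter set from $\{B_i > 0\}$ to $\{B_i \geq Q_i,\ B_i > 0\}$, so the infimum on the right goes \emph{up}; the general statement is genuinely stronger, not a trivial corollary of the $Q_i=0$ one. The actual content, which your proposal does not touch, is the four-step scheme in Section \ref{sec:IBL}: (1) attainment of $I_{\A,\B}$ over $\F_{\A,\B}$ by compactness; (2) the doubling inequality $\BL(\f)^2 \geq I_{\A,\B}\,\BL(\Conv\f)$; (3) Gaussian saturation $I_{\A,\B} = I^\G_{\A,\B}$ via iterated self-convolution and the Central Limit Theorem; and (4) the limit $\lambda \to 0^+$, $\Lambda \to \infty$ to recover $I_\Q = I^\G_\Q$. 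The ingredient that makes the extension from $\lambda\Id,\Lambda\Id$ to general $\A,\B$ work is Lemma \ref{lem:silly}, which verifies the stability of the classes $\F_{\A,\B}$ under the self-convolution and mollification operations used in steps (2)--(4); none of that appears in your write-up.
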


On the other hand, the following forward generalized Brascamp--Lieb inequality was established by Bennett--Carbery--Christ--Tao \cite[Corollaries 8.15 and 8.16]{BCCT-BrascampLieb} and Valdimarsson \cite[Theorem 1.4]{Valdimarsson-GenCaffarelli} whenever $f_i$ are generated by $g_{Q_i}$. Valdimarsson's proof is based on a generalized version of Caffarelli's contraction theorem \cite{CaffarelliContraction}, stating that if $f_i$ is generated by $g_{Q_i}$ and $h_i$ is more log-concave than $g_{Q_i^{-1}}$, then Brenier's optimal transport map $T = \nabla \varphi$ \cite{BrenierMap} pushing forward $\mu = a f_i(x) dx$ onto $\nu = b h_i(x) dx$ (both normalized to be probability measures) satisfies $0 \leq \Hess \; \varphi \leq Q_i$ \cite[Theorem 1.2]{Valdimarsson-GenCaffarelli}. In fact, the assumption that $f_i$ is generated by $g_{Q_i}$ can be relaxed to the assumption that $f_i$ is more log-convex than $g_{Q_i}$. This follows from an extension of Caffarelli's contraction theorem due to Kolesnikov \cite[Theorem 2.2]{KolesnikovContractionSurvey}, who showed that if $f^0_i$ is more log-convex than $g_{\Id}$ and $h^0_i$ is more log-concave than $g_{\Id}$ then the Brenier map $T_0 = \nabla \varphi_0$ pushing forward $\mu_0 = a f_i^0(x) dx$ onto $\nu_0 = b h_i^0(x) dx$ satisfies $0 \leq \Hess \; \varphi_0\leq \Id$; applying this to $\mu_0$ and $\nu_0$ given by the push-forwards of $\mu$ by $\sqrt{Q_i}$ and $\nu$ by $(\sqrt{Q_i})^{-1}$, respectively, and setting $\varphi = \varphi_0 \circ \sqrt{Q_i}$, it follows that $T = \nabla \varphi = \sqrt{Q_i} \circ T_0 \circ \sqrt{Q_i}$ is the Brenier map pushing forward $\mu$ onto $\nu$, and it satisfies $0 \leq  \Hess \; \varphi \leq Q_i$. See also the recent extensions to the case when $f_i$ is more log-convex than $g_{A_i}$ and $h_i$ is more log-concave than $g_{B_i^{-1}}$ for commuting and even general $A_i,B_i > 0$ due to Chewi--Pooladian \cite[Theorem 13]{ChewiPooladian-GenCaffarelli} and Gozlan--Sylvestre \cite[Theorem 5.4]{GozlanSylvestre-GeneralizedContractions}, respectively. Repeating Valdimarsson's argument from \cite{Valdimarsson-GenCaffarelli}, the following is thereby immediately deduced (see the proof of \cite[Lemma 8.13]{BCCT-BrascampLieb} to reduce to the case $Q = 0$):

\begin{thm}[After \cite{BCCT-BrascampLieb,Valdimarsson-GenCaffarelli,KolesnikovContractionSurvey}]
Let $Q \geq 0$ be an $N \times N$ \textbf{positive semi-definite} matrix, and let $Q_i \geq 0$ denote positive semi-definite $n_i \times n_i$ matrices, for $i=1,\ldots,m$. Let $c_1,\ldots,c_m>0$. Then for all (non-zero) $f_i \in L^1(\R^{n_i})$ which are \textbf{more log-convex than $g_{Q_i}$}  (but not necessarily even),
\[
\frac{\int_{\R^N} e^{-\frac{1}{2}\scalar{Q x,x}} \Pi_{i=1}^m f_i(x_i)^{c_i} dx}{\Pi_{i=1}^m \brac{\int_{\R^{n_i}} f_i(x_i) dx_i}^{c_i} } \leq \inf_{0 < B_i \leq Q_i} \frac{\int_{\R^N} e^{-\frac{1}{2}\scalar{Q x,x}} \Pi_{i=1}^m g_{B_i}(x_i)^{c_i} dx}{\Pi_{i=1}^m \brac{\int_{\R^{n_i}} g_{B_i}(x_i) dx_i}^{c_i} } . 
\]
\end{thm}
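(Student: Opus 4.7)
The plan is to follow the strategy outlined in the excerpt, originally due to Valdimarsson \cite{Valdimarsson-GenCaffarelli}, with the additional input that Caffarelli's contraction theorem can be replaced by Kolesnikov's extension \cite{KolesnikovContractionSurvey} via the $\sqrt{Q_i}$ rescaling argument spelled out above; this is precisely what allows the hypothesis ``$f_i$ generated by $g_{Q_i}$'' used in \cite{BCCT-BrascampLieb,Valdimarsson-GenCaffarelli} to be relaxed to ``$f_i$ more log-convex than $g_{Q_i}$''. The proof proceeds in two stages: first a reduction to the case $Q=0$, and then a Brenier transport argument in the reduced setting.

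For the reduction I would invoke \cite[Lemma 8.13]{BCCT-BrascampLieb}, which allows the Gaussian weight $e^{-\frac12 \scalar{Q x, x}}$ to be absorbed into the Brascamp--Lieb datum by factoring $Q = R^* R$ and introducing an auxiliary Gaussian factor of the form $g_\Id \circ R$; since this auxiliary factor is itself Gaussian, the hypothesis of the theorem is preserved under the reduction. This momentarily takes us outside the block-diagonal setting $P_i x = x_i$, but we land back in the $Q = 0$ situation once the reduction is complete.

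For the Brenier step, normalize $\int f_i = 1$ (both sides scale homogeneously in each $f_i$), fix admissible test matrices $0 < B_i \leq Q_i$, and let $T_i = \nabla \varphi_i : \R^{n_i} \to \R^{n_i}$ be the Brenier optimal transport map pushing $f_i(x_i) dx_i$ onto the normalized Gaussian probability measure $\tilde g_{B_i}(x_i) dx_i$, where $\tilde g_{B_i} := g_{B_i}/\int g_{B_i}$. Applying the Kolesnikov-type contraction estimate in the form detailed in the excerpt (source more log-convex than $g_{Q_i}$, target a Gaussian) yields an almost-everywhere Hessian bound of the form $0 \leq \Hess \varphi_i \leq Q_i$. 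Combining the Monge--Amp\`ere identity $f_i(x_i) = \tilde g_{B_i}(T_i(x_i)) \det \Hess \varphi_i(x_i)$ with this bound rewrites the (reduced) left-hand side as
\[
\int_{\R^N} \prod_{i=1}^m \brac{\tilde g_{B_i}(\nabla \varphi_i(x_i)) \det \Hess \varphi_i(x_i)}^{c_i} dx,
\]
and a Valdimarsson-style AM-GM/determinant comparison using $\Hess \varphi_i \leq Q_i$ controls $\prod (\det \Hess \varphi_i)^{c_i}$; after the change of variables $y_i = T_i(x_i)$ the integrand reassembles into $\prod g_{B_i}(y_i)^{c_i}$, yielding the claimed Gaussian upper bound.

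I expect the most delicate step to be ensuring that the Kolesnikov-type contraction $0 \leq \Hess \varphi_i \leq Q_i$ is truly available throughout the full range $0 < B_i \leq Q_i$; for non-commuting $Q_i$ and $B_i$ this may require invoking the recent refinements of Gozlan--Sylvestre \cite{GozlanSylvestre-GeneralizedContractions} or Chewi--Pooladian \cite{ChewiPooladian-GenCaffarelli} mentioned in the excerpt. A secondary concern is the compatibility of the $Q \neq 0$ reduction with the log-convexity hypothesis, but since the auxiliary factor introduced is itself a Gaussian this should go through without modification.
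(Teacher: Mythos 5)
Your high-level outline --- reduce $Q>0$ to $Q=0$ via \cite[Lemma~8.13]{BCCT-BrascampLieb}, then run Valdimarsson's transport argument upgraded by Kolesnikov's relaxation of Caffarelli's theorem --- is exactly what the paper's one-line proof points to, so the spirit is right. However, the transport step as you set it up has a genuine gap, and it is not the one you flag.

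The contraction estimate $0 \leq \Hess \varphi_i \leq Q_i$ requires the \emph{source} to be more log-convex than $g_{Q_i}$ \emph{and} the \emph{target} to be more log-concave than $g_{Q_i^{-1}}$. You transport $f_i$ onto $\tilde g_{B_i}$ for a test matrix $0 < B_i \leq Q_i$, but $\tilde g_{B_i}$ is more log-concave than $g_{Q_i^{-1}}$ iff $B_i \geq Q_i^{-1}$ --- a constraint which is neither implied by nor in general compatible with $B_i \leq Q_i$. For $B_i \ll Q_i^{-1}$ the Brenier map from $f_i$ to $\tilde g_{B_i}$ is an expansion, and the claimed Hessian bound is simply false; the Chewi--Pooladian/Gozlan--Sylvestre refinements do not rescue this, as in the commuting case they give only $\Hess\varphi_i \leq (Q_i B_i^{-1})^{1/2}$, which exceeds $Q_i$ precisely when $B_i < Q_i^{-1}$. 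This failure is not cosmetic: if your argument were valid for every $0 < B_i \leq Q_i$, it would bound the left-hand side by the \emph{infimum} of the Gaussian ratio over such $B_i$, but already for $m=2$, $n_1=n_2=1$, $c_1=c_2=1$, $Q=q\,\Id_2$, $Q_1=Q_2=1$ that ratio equals $\sqrt{B_1 B_2/((q+B_1)(q+B_2))}$, whose infimum over $0 < B_i \leq 1$ is $0$, while the left-hand side at $f_i = g_1$ equals $1/(q+1) > 0$. What BCCT and Valdimarsson actually prove is the dominance by the \emph{supremum} over $B_i \leq Q_i$ (dually to the infimum appearing on the right of Theorem~\ref{thm:intro-IBL2}); the correct transport target is a single reference Gaussian $\tilde g_{C_i}$ with $C_i \geq Q_i^{-1}$ --- e.g.\ $C_i = Q_i^{-1}$ --- for which $\Hess\varphi_i \leq Q_i$ genuinely holds, after which one identifies the resulting bound with the Gaussian extremum via Valdimarsson's computation rather than per-$B_i$.

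Two further loose ends in your sketch. First, after the \cite[Lemma~8.13]{BCCT-BrascampLieb} reduction the reduced left-hand side is $\int g_{\Id}(Rx)\prod_i f_i(x_i)^{c_i}\,dx$ with $Q = R^*R$; the auxiliary factor $g_{\Id}(Rx)$ is not block-diagonal, so your per-block substitution $y_i = T_i(x_i)$ does not act on it, and handling it requires the Barthe/Valdimarsson matrix-determinant inequality on the joint Jacobian rather than a plain change of variables. Second, the Jacobian of $x \mapsto (T_1(x_1),\ldots,T_m(x_m))$ is $\prod_i \det\Hess\varphi_i$, whereas the integrand carries $\prod_i (\det\Hess\varphi_i)^{c_i}$; reconciling the two (using the Hessian bound and the determinant comparison) is precisely the content of the ``Valdimarsson-style AM--GM'' step, and your sketch leaves it entirely implicit.
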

\noindent
Taking $Q_i = \Lambda \cdot \Id_{n_i}$ with $\Lambda \nearrow +\infty$, one (formally) recovers a result of Lieb \cite[Theorem 6.2]{Lieb-MultiDimBL} (see also \cite[Corollary 8.16]{BCCT-BrascampLieb}), in which $f_i \in L^1(\R^{n_i})$ are arbitrary. See also \cite[Theorem 2.1]{BezNakamura-RegularizedBrascampLieb} for an inverse Brascamp--Lieb inequality for functions generated by $g_{Q_i}$. 
In addition, Valdimarsson established in \cite[Theorem 1.4 (2)]{Valdimarsson-GenCaffarelli} an analogous \emph{reverse} Brascamp--Lieb inequality for the sup-convolution of functions $f_i$ which are more log-concave than $g_{Q_i}$, extending Barthe's proof from \cite{Barthe-ReverseBL}; as already mentioned, when $P_i(x) = x_i$, this reverse form coincides with the case $Q=0$ and $Q_i > 0$ of Theorem \ref{thm:intro-IBL2}, but without requiring the evenness assumption.
In view of these analogies (which seem to hint at a natural duality relation between the forward and inverse directions), the requirement that $h_i$ be log-concave in Theorem \ref{thm:intro-IBL} is very natural.

\subsection{Gaussian Correlation Inequality}

Our main motivation for insisting on the extended version of Theorem \ref{thm:intro-IBL} is that it leads to a new and simple proof of Royen's Gaussian Correlation Inequality \cite{Royen-GaussianCorrelation} (see also \cite{LatalaMatlak-GaussianCorrelation}): 

\begin{thm}[Gaussian Correlation Inequality \cite{Royen-GaussianCorrelation}] \label{thm:intro-GCI}
Let $X = (X_1,\ldots,X_N)$ denote a centered Gaussian random vector in $\R^N$ (with arbitrary positive-definite covariance). Then for all $n_1 + n_2 = N$:
\[
\P(\max_{1 \leq i \leq N} |X_i| \leq 1) \geq \P(\max_{1 \leq i \leq n_1} |X_i| \leq 1) \; \P(\max_{n_1+1 \leq i \leq N} |X_i| \leq 1) . 
\]
\end{thm}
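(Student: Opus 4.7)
The plan is to invoke Theorem \ref{thm:intro-IBL} with $m=2$, $c_1=c_2=1$, $\bar Q := \Sigma^{-1}$ (where $\Sigma$ is the covariance of $X$), $Q_i := \Sigma_i^{-1}$ (where $\Sigma_i$ is the $n_i\times n_i$ diagonal block of $\Sigma$, i.e.\ the covariance of the marginal $X^{(i)}$), and $h_i := 1_{K_i}$ with $K_i := [-1,1]^{n_i}$. The indicator of a symmetric convex body is non-zero, even, and log-concave, and is trivially integrable against the Gaussian weight $g_{Q_i}$ because $K_i$ is bounded, so all hypotheses hold. After dividing numerator and denominator by the natural Gaussian partition functions $Z := (2\pi)^{N/2}\sqrt{\det\Sigma}$ and $Z_i := (2\pi)^{n_i/2}\sqrt{\det\Sigma_i}$, the left-hand side of Theorem \ref{thm:intro-IBL} becomes
\[
\frac{Z}{Z_1 Z_2}\cdot \frac{\P(X\in K_1\times K_2)}{\P(X^{(1)}\in K_1)\,\P(X^{(2)}\in K_2)},
\]
while the Gaussian integrals on the right evaluate to $\inf_{A_i\geq 0}\sqrt{\det(Q_1+A_1)\det(Q_2+A_2)/\det(\bar Q + A_1\oplus A_2)}$.

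Since $Z/(Z_1Z_2) = \sqrt{\det Q_1 \det Q_2/\det\bar Q}$, Theorem \ref{thm:intro-GCI} will follow provided that for every admissible $A_i\geq 0$,
\[
\det(Q_1+A_1)\det(Q_2+A_2)\det\bar Q \;\geq\; \det Q_1 \det Q_2\det(\bar Q + A_1\oplus A_2).
\]
Substituting $Q_i=\Sigma_i^{-1}$, $\bar Q=\Sigma^{-1}$ and using the identity $\det(M+A)/\det M = \det(I + M^{-1}A)$, this rearranges to the purely matrix-theoretic claim
\[
\det\bigl(I + \Sigma(A_1\oplus A_2)\bigr) \;\leq\; \det(I+\Sigma_1 A_1)\,\det(I+\Sigma_2 A_2)
\]
for all PSD $A_1,A_2$.

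The final step, which is the real crux once the reduction is in place, is to prove this determinantal inequality; I expect (and it turns out) that it is a one-line consequence of Fischer's inequality. Setting $B:=A_1\oplus A_2\geq 0$ and writing $\det(I+\Sigma B)=\det(I + B^{1/2}\Sigma B^{1/2})$, the matrix $M:=B^{1/2}\Sigma B^{1/2}$ is positive semi-definite with $n_i\times n_i$ diagonal blocks $M_i = B_i^{1/2}\Sigma_i B_i^{1/2}$; hence $I+M$ is positive-definite, and Fischer's inequality immediately yields $\det(I+M)\leq \prod_{i=1}^2\det(I+M_i) = \prod_{i=1}^2\det(I+\Sigma_i A_i)$, as required. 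The main obstacle is therefore conceptual rather than computational: one has to guess the correct specialization of Theorem \ref{thm:intro-IBL} (in particular, the choice $Q_i=\Sigma_i^{-1}$ and $\bar Q=\Sigma^{-1}$, which crucially exploits the freedom to take the signature of $Q=\bar Q - Q_1\oplus Q_2$ indefinite) so that the infimum over Gaussian test functions on the right collapses into a single application of Fischer's inequality.
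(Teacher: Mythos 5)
Your proof is correct and matches the paper's argument essentially line for line: the same specialization of Theorem \ref{thm:intro-IBL} (with $\bar Q=\Sigma^{-1}$, $Q_i=\Sigma_i^{-1}$, $h_i=\mathbf{1}_{[-1,1]^{n_i}}$), the same reduction of the Gaussian ratio to a determinant quotient, the same symmetrization $\det(I+\Sigma B)=\det(I+B^{1/2}\Sigma B^{1/2})$, and the same appeal to Fischer's inequality to show the infimum equals $1$. This is precisely the route the paper takes (via the referee's simplification of the original monotonicity argument).
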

The case $\min(n_1,n_2)=1$ above was previously established independently by Khatri \cite{Khatri} and \v{S}id\'ak \cite{Sidak}; see also \cite{SSZ-GaussianCorrelationConjecture,HargeGCCForEllipsoid, CorderoMassTransportAndGaussianInqs} for additional prior results and \cite{Tehranchi-RefinedGaussianCorrelation,ENT-GaussianMixtures,Royen-ImprovedGaussianCorrelation,ACS-RefinedKhatriSidak} for extensions. In fact, Royen established this for a more general family of Gamma distributions, but we restrict our attention to the Gaussian case, for which the non-negative correlation was originally investigated by Pitt \cite{Pitt-GaussianCorrelationInPlane} and remained open (in general) for several decades. Since the inequality holds for arbitrary dimension $N$ and arbitrary covariance, it is well-known and easy to verify (see e.g.~\cite{SSZ-GaussianCorrelationConjecture,LatalaMatlak-GaussianCorrelation} or Subsection \ref{subsec:monotonicity}) that it is equivalent to any of the following statements. Recall that $P_i : \R^N \rightarrow \R^{n_i}$ is given by $P_i x = x_i$ for $x = (x_1,x_2) \in \R^{n_1} \oplus \R^{n_2}$. \begin{itemize}
\item For all origin-symmetric convex sets $K=-K$ and $L=-L$ in $\R^n$ and all $n \geq 1$:
\[
\gamma^n(K \cap L) \geq \gamma^n(K) \gamma^n(L) ,
\]
where $\gamma^n$ denotes the standard Gaussian measure on $\R^n$. This formulation was established for $n=2$ by Pitt \cite{Pitt-GaussianCorrelationInPlane}. 
\item For all even quasi-concave functions $f_1,f_2 : \R^{N} \rightarrow \R_+$, namely functions whose super-level sets $\{ f_i \geq a\}$ are origin-symmetric convex sets for all $a$, one has:
\[
\E f_1(X) f_2(X) \geq \E f_1(X) \E f_2(X) .
\]
\item For all even quasi-concave functions $f_i : \R^{n_i} \rightarrow \R_+$, $i=1,2$, one has:
\[ \E f_1(P_1 X) f_2(P_2 X) \geq \E f_1(P_1 X) \E f_2(P_2 X) .
\] \end{itemize}

\medskip

To see that Theorem \ref{thm:intro-IBL} yields Theorem \ref{thm:intro-GCI}, denote by $\Sigma$, $\Sigma_1$ and $\Sigma_2$ the covariance matrices of $X$, $P_1 X$ and $P_2 X$, respectively. Applying Theorem \ref{thm:intro-IBL} with $\bar Q = \Sigma^{-1}$, $Q_1 = \Sigma_1^{-1}$, $Q_2 = \Sigma_2^{-1}$, $c_1 = c_2 = 1$, $m=2$ and the even log-concave $h_i(x_i) = \mathbf{1}_{K_i}(x_i)$ with $K_i = [-1,1]^{n_i}$, we immediately obtain:
\begin{align}
\nonumber  & \frac{\P(\max_{1 \leq i \leq N} |X_i| \leq 1) }{\P(\max_{1 \leq i \leq n_1} |X_i| \leq 1) \; \P(\max_{n_1+1 \leq i \leq N} |X_i| \leq 1)} \\
\label{eq:equality}  & = \frac{ \E \mathbf{1}_{K_1}(P_1 X) \mathbf{1}_{K_2}(P_2 X)  }{\E \mathbf{1}_{K_1}(P_1 X) \E \mathbf{1}_{K_2}(P_2 X)} \geq  \inf_{A_1,A_2 \geq 0} \frac{\E g_{A_1}(P_1 X) g_{A_2}(P_2 X)}{\E g_{A_1}(P_1 X) \E g_{A_2}(P_2 X)} \\
\label{eq:inf0} & = \inf_{A_1,A_2 \geq 0} \brac{\frac{\det(\Id_{n_1} +  A_1 \Sigma_1) \det(\Id_{n_2} + A_2 \Sigma_2) }{ \det (\Id_{N} + (A_1 \oplus A_2) \Sigma)}}^{\frac{1}{2}}  \\
\label{eq:inf} & =  \inf_{A_1,A_2 \geq 0} \brac{\frac{\det(\Id_{n_1} +  \sqrt{A_1} \Sigma_1 \sqrt{A_1}) \det(\Id_{n_2} + \sqrt{A_2} \Sigma_2 \sqrt{A_2}) }{ \det (\Id_{N} + \sqrt{A_1 \oplus A_2} \Sigma \sqrt{A_1 \oplus A_2})}}^{\frac{1}{2}} ,
\end{align}
where in the last transition we used the obvious fact that $\det(\Id_n + A B) = \det(\Id_n + BA)$ for all $n\times n$ matrices $A,B$. 

It remains to show that the infimum in (\ref{eq:inf}) is precisely $1$, attained at $A_1 = A_2 = 0$. This follows from the following inequality of Fischer \cite[Theorem 7.8.5]{HornJohnson-MatrixAnalysis}, which is an equivalent version of a classical inequality of Hadamard \cite[Theorem 7.8.1]{HornJohnson-MatrixAnalysis}:
\begin{lemma}[Fischer's inequality] \label{lem:Fischer}
Let $M = \begin{pmatrix} M_1 & M_o\\ M_o^* & M_2 \end{pmatrix}$ denote a positive-definite Hermitian matrix in block form. Then:
\begin{equation} \label{eq:Fischer}
\det(M) \leq \det(M_1) \det(M_2) ,
\end{equation}
with equality if and only if $M_o = 0$. 
\end{lemma}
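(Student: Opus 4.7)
The plan is to use the block Schur complement decomposition and the monotonicity of $\det$ on positive-definite matrices. Since $M > 0$, both diagonal blocks $M_1, M_2$ are positive-definite and hence invertible, so one may write the block LDU factorization
\[
M = \begin{pmatrix} I & 0 \\ M_o^* M_1^{-1} & I \end{pmatrix} \begin{pmatrix} M_1 & 0 \\ 0 & S \end{pmatrix} \begin{pmatrix} I & M_1^{-1} M_o \\ 0 & I \end{pmatrix} , \qquad S := M_2 - M_o^* M_1^{-1} M_o ,
\]
where $S$ is the Schur complement of $M_1$ in $M$. Taking determinants of both sides (the outer triangular factors have determinant $1$) yields the exact identity $\det(M) = \det(M_1) \det(S)$.

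Next I would observe that $S > 0$: conjugating $M$ by the invertible block-triangular factor above shows that the block-diagonal matrix $\mathrm{diag}(M_1, S)$ is congruent to $M$, hence positive-definite, which forces $S > 0$. On the other hand, since $M_1^{-1} > 0$ one has $M_o^* M_1^{-1} M_o \geq 0$, so $S \leq M_2$ in the Loewner order. The inequality (\ref{eq:Fischer}) now follows from the elementary monotonicity property that $0 < A \leq B$ implies $\det(A) \leq \det(B)$ (apply it to $A = S$, $B = M_2$, using for instance that $\det(B)/\det(A) = \det(A^{-1/2} B A^{-1/2}) \geq 1$ since $A^{-1/2} B A^{-1/2} \geq I$).

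For the equality case, $\det(M) = \det(M_1)\det(M_2)$ combined with $\det(M) = \det(M_1)\det(S)$ gives $\det(S) = \det(M_2)$, and the strict monotonicity $\det(A) < \det(B)$ when $0 < A < B$ (equivalent to: all eigenvalues of $A^{-1/2} B A^{-1/2}$ being $\geq 1$, with product strictly greater than $1$ unless all equal $1$) forces $S = M_2$, i.e.\ $M_o^* M_1^{-1} M_o = 0$. Writing this as $(M_1^{-1/2} M_o)^* (M_1^{-1/2} M_o) = 0$ gives $M_1^{-1/2} M_o = 0$, and since $M_1^{-1/2}$ is invertible one concludes $M_o = 0$, as desired. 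There is no genuine obstacle here — the proof is classical — the only mildly delicate point is keeping the equality analysis clean, which is handled transparently via the factorization $M_o^* M_1^{-1} M_o = (M_1^{-1/2} M_o)^*(M_1^{-1/2} M_o)$.
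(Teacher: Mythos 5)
The paper does not prove Lemma~\ref{lem:Fischer}; it simply cites \cite[Theorem 7.8.5]{HornJohnson-MatrixAnalysis}, noting that Fischer's inequality is equivalent to Hadamard's inequality \cite[Theorem 7.8.1]{HornJohnson-MatrixAnalysis}. Your Schur-complement argument is correct and complete: the block LDU factorization gives $\det(M)=\det(M_1)\det(S)$ exactly, congruence gives $S>0$, the Loewner bound $S\leq M_2$ plus strict monotonicity of $\det$ on positive-definite matrices gives the inequality with its equality case, and the observation $M_o^* M_1^{-1}M_o=(M_1^{-1/2}M_o)^*(M_1^{-1/2}M_o)$ cleanly extracts $M_o=0$. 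This is the standard textbook route (essentially the one in Horn--Johnson), and it is in fact somewhat more self-contained than the paper's treatment, which defers to Hadamard's inequality; by contrast, your argument reduces to nothing beyond the Schur complement identity and the elementary fact that $0<A\leq B$ with $\det A=\det B$ forces $A=B$.
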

\noindent
Writing $\Sigma = \begin{pmatrix} \Sigma_1 & \Sigma_o \\ \Sigma_o^* & \Sigma_2 \end{pmatrix}$ and applying (\ref{eq:Fischer}) to the positive-definite matrix
\begin{equation} \label{eq:Fischer-matrix}
\Id_N + \sqrt{A_1 \oplus A_2} \Sigma \sqrt{A_1 \oplus A_2} = \begin{pmatrix} \Id_{n_1} + \sqrt{A_1} \Sigma_1\sqrt{A_1} & \sqrt{A_1} \Sigma_o \sqrt{A_2} \\ \sqrt{A_2} \Sigma_o^* \sqrt{A_1} & \Id_{n_2} + \sqrt{A_2} \Sigma_2 \sqrt{A_2} \end{pmatrix} ,
\end{equation}
this concludes the proof of Theorem \ref{thm:intro-GCI}. 
The above simple argument for showing that the infimum in (\ref{eq:inf}) is equal to $1$ is due to an anonymous referee, whom we would like to thank. Our original argument relied on a monotonicity lemma established by Royen in \cite{Royen-GaussianCorrelation} -- see Section \ref{sec:compare} for a comparison between our approach and Royen's one, including analysis of the equality case in Theorem \ref{thm:intro-GCI}.

\smallskip

The above reasoning in fact shows that the log-concavity assumption in Theorem \ref{thm:intro-IBL} cannot be omitted. Otherwise, the same proof would imply that:
\[
\P( P_1 X \in K \wedge P_2 X \in L) \geq \P(P_1 X \in K) \; \P(P_2 X \in L) ,
\]
for any origin-symmetric  $K=-K \subset \R^{n_1}$ and $L=-L \subset \R^{n_2}$ regardless of convexity. Applying this to both $L$ and its complement and summing, it would follow that we must actually have equality above. But clearly it is not true that
\[
\P( P_1 X \in K \wedge P_2 X \in L) = \P(P_1 X \in K) \; \P(P_2 X \in L) 
\]
for general covariance matrices $\Sigma > 0$ even when $K$ and $L$ slabs. Consequently, we deduce (for example) that Theorem \ref{thm:intro-IBL} is false for $N=2$, $m=2$, $n_1=n_2=1$, $h_1 = \textbf{1}_{[-1,1]}$, $h_2 = \textbf{1}_{\R \setminus [-1,1]}$ (which is not log-concave), $\bar Q = \Sigma^{-1}$, $Q_i = \Sigma_{i}^{-1}$ and $c_1=c_2=1$, for generic positive-definite $2 \times 2$ matrices $\Sigma$.

\bigskip
\noindent
\textbf{Acknowledgements}. I thank Dario Cordero-Erausquin, Shohei Nakamura, Shay Sadovsky and Hiroshi Tsuji for very helpful references. I also thank the anonymous referees for their careful reading of the paper and helpful comments and suggestions.

\section{Proof of Theorem \ref{thm:intro-IBL2}} \label{sec:IBL}

The proof of Theorem \ref{thm:intro-IBL2} is a verbatim repetition of the proof of \cite[Theorem 1.3]{NakamuraTsuji-InverseBrascampLieb} by Nakamura--Tsuji. 
Note that we will use the formulation of Theorem \ref{thm:intro-IBL2} rather than the one of the equivalent Theorem \ref{thm:intro-IBL}. Consequently, even if one is only interested in the case $\bar Q \geq 0$ of Theorem \ref{thm:intro-IBL}, we must handle $Q$ of arbitrary signature in Theorem \ref{thm:intro-IBL2} to make the equivalence valid.

Let us now sketch the proof of Theorem \ref{thm:intro-IBL2}, emphasizing the necessary adjustments for handling general $Q_i \geq 0$. Given a sequence of positive semi-definite $n_i \times n_i$ matrices $A_i \geq 0$, $i=1,\ldots,m$, denote $\A = (A_1,\ldots,A_m)$, writing $\A \geq 0$. When $A_i = \lambda \Id_{n_i}$ for all $i=1,\ldots,m$, we will write $\A = \lambda \Id$. We will also denote $\A+\B = (A_i + B_i)_{i=1,\ldots,m}$, and write $\A > 0$ or $\A \leq \B$ if $A_i > 0$ or $A_i \leq B_i$ for all $i=1,\ldots,m$, respectively. Given $0 \leq \A \leq \B$, denote  by $\F_{\A}$ the collection of $\f = (f_1,\ldots,f_m)$ so that $f_i \in L^1(\R^{n_i}, \R_+)$ is even, non-zero, and more log-concave than $g_{A_i}$ for all $i=1,\ldots,m$, and by $\F_{\A,\B} \subset \F_\A$ the subfamily so that $f_i$ is in addition more log-convex than $g_{B_i}$. Also denote by $\G_\A \subset \F_\A$ (respectively, $\G_{\A,\B} \subseteq \F_{\A,\B}$) the subfamilies consisting of centered Gaussians, namely $\g = (g_1,\ldots,g_m)$ where $g_i = g_{C_i}$ with $C_i > 0$ and $C_i \geq A_i$ (respectively, $A_i \leq C_i \leq B_i$). 

\smallskip

Now fix $Q$ and $c_i > 0$ as in Theorem \ref{thm:intro-IBL2}, set:
\[
\BL(\f) := \frac{\int e^{-\frac{1}{2}\scalar{Q x,x}} \Pi_{i=1}^m f_i(x_i)^{c_i} dx}{\Pi_{i=1}^m \brac{\int_{\R^{n_i}} f_i(x_i) dx_i}^{c_i} } ,
\]
and denote:
\[
I_{\A,\B} := \inf_{\f \in \F_{\A,\B}} \BL(\f) ~,~
I_{\A} := \inf_{\f \in \F_{\A}} \BL(\f) ~,~
 I^{\G}_{\A,\B} := \inf_{\g \in \G_{\A,\B}} \BL(\g) ~,~
 I^{\G}_{\A} := \inf_{\g \in \G_{\A}} \BL(\g) .
\]
The proof of Theorem \ref{thm:intro-IBL2} consists of the following four steps. Fix $0 < \A \leq \B$ (we may assume that $\B = \Lambda \Id$). 
\begin{enumerate}
\item \label{it:step1} The infimum in the definition of $I_{\A,\B}$ is attained on some $\f \in \F_{\A,\B}$. 
\item \label{it:step2} If $f \in \F_{\A,\B}$ then 
\[
\BL(\f)^2 \geq I_{\A,\B} \, \BL(\Conv \f),
\]
where $\Conv \f = (\Conv f_1,\ldots,\Conv f_m)$ and $\Conv f_i$ denotes $2^{n_i/2} (f_i \ast f_i)(\sqrt{2} \; \cdot)$. 
\item \label{it:step3} $I_{\A,\B} \geq I^\G_{\A,\B}$ and hence $I_{\A,\B} = I^\G_{\A,\B}$. 
\item \label{it:step4} For all $\Q \geq 0$, $I_\Q = \lim_{\lambda \rightarrow 0^+} \lim_{\Lambda \rightarrow \infty} I_{\Q + \lambda \Id, \Lambda \Id}$. 
\end{enumerate}

These steps were carried out in \cite{NakamuraTsuji-InverseBrascampLieb} for $\A = \lambda \Id$, $\B = \Lambda \Id$ and $\Q = \lambda_0 \Id$, $0 \leq \lambda_0 < \lambda \leq \Lambda < \infty$. The only required modifications to the proof for handling general $\A ,\B,\Q$ as above are encapsulated in the following simple:

\begin{lemma} \label{lem:silly}
Let $f_i \in L^1(\R^n,\R_+)$, $i=1,2$.
\begin{enumerate}[(i)]
\item \label{it:part1} If $f_i$ is more log-concave (respectively, more log-convex) than $g_{A_i}$,  $A_i > 0$ and $i=1,2$, then for all $x \in \R^n$, $\R^n \ni y \mapsto f_1(\frac{x+y}{\sqrt{2}}) f_2(\frac{x-y}{\sqrt{2}})$ is more log-concave (respectively, more log-convex) than $g_{\frac{A_1+A_2}{2}}$.
\item \label{it:part2} If $f_i$ is more log-concave (respectively, more log-convex) than $g_{A_i}$, $A_i > 0$ and $i=1,2$, then $f_1 \ast f_2$ is more log-concave (respectively, more log-convex) than $g_{C}$ for $C^{-1} = A_1^{-1} + A_2^{-1}$. 
In particular, if  $f_1$ is more log-convex than $g_{A_1}$, then without any further assumptions on $f_2$, $f_1 \ast f_2$ is also more log-convex than $g_{A_1}$. 
\end{enumerate}
\end{lemma}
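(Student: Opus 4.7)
The natural strategy is to write $f_i = g_{A_i} h_i$ with $h_i$ log-concave (respectively, log-convex) and then to extract the claimed Gaussian factor by a completion-of-squares identity, leaving a residual factor that transparently inherits the one-sided log-modulus from the $h_i$.

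For part (\ref{it:part1}), a direct expansion shows that the pure quadratic-in-$y$ part of $-\log\bigl(g_{A_1}(\tfrac{x+y}{\sqrt{2}})\, g_{A_2}(\tfrac{x-y}{\sqrt{2}})\bigr)$ is exactly $\tfrac{1}{4}\scalar{(A_1+A_2)y,y} = -\log g_{(A_1+A_2)/2}(y)$, with the remaining $y$-dependence only affine. Hence
\[
g_{A_1}\bigl(\tfrac{x+y}{\sqrt{2}}\bigr)\, g_{A_2}\bigl(\tfrac{x-y}{\sqrt{2}}\bigr) = g_{(A_1+A_2)/2}(y)\cdot e^{L_x(y)}
\]
for some $L_x$ affine in $y$. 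Since $h_i(\tfrac{x\pm y}{\sqrt{2}})$ are log-concave (respectively, log-convex) in $y$ as compositions with affine maps, and the log-affine factor $e^{L_x(y)}$ is compatible with either convention, the quotient of the full product by $g_{(A_1+A_2)/2}(y)$ has the required one-sided log-modulus.

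For part (\ref{it:part2}), completing the square in $y$ inside $g_{A_1}(x-y)g_{A_2}(y)$ gives the key Gaussian factorization
\[
g_{A_1}(x-y)\, g_{A_2}(y) = g_C(x)\, g_{A_1+A_2}\bigl(y - y^*(x)\bigr), \quad y^*(x) := (A_1+A_2)^{-1} A_1 x,
\]
with $C := A_1(A_1+A_2)^{-1} A_2$ satisfying $C^{-1} = A_1^{-1} + A_2^{-1}$. Substituting $z = y - y^*(x)$ in the convolution then yields $(f_1 \ast f_2)(x) = g_C(x)\, H(x)$, where
\[
H(x) := \int g_{A_1+A_2}(z)\, h_1\bigl(u(x,z)\bigr)\, h_2\bigl(v(x,z)\bigr)\, dz ,
\]
with $u(x,z) := (A_1+A_2)^{-1} A_2\, x - z$ and $v(x,z) := (A_1+A_2)^{-1} A_1\, x + z$ both affine in $(x,z)$. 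In the log-concave case, the integrand is jointly log-concave in $(x,z)$ (the $h_i$-factors as compositions with affine maps, and the Gaussian in $z$ since $A_1 + A_2 > 0$), so Pr\'ekopa--Leindler gives $H$ log-concave in $x$, concluding this branch.

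The main obstacle is the log-convex branch, for which Pr\'ekopa--Leindler is unavailable. The key observation is that for each fixed $z$ the above integrand is log-convex in $x$ (since $u,v$ are $x$-affine and the $g_{A_1+A_2}(z)$ weight is constant in $x$), and log-convexity of a non-negative family is preserved under integration by H\"older's inequality. This same H\"older trick yields the ``in particular'' clause without any completion of squares: expanding $g_{A_1}(x-y) = g_{A_1}(x)\, g_{A_1}(y)\, e^{\scalar{A_1 x, y}}$ and pulling out $g_{A_1}(x)$ gives
\[
(f_1 \ast f_2)(x) = g_{A_1}(x) \int e^{\scalar{A_1 x, y}}\, h_1(x-y)\, \bigl[g_{A_1}(y) f_2(y)\bigr]\, dy ,
\]
whose integrand is, for each fixed $y$, log-convex in $x$ as the product of a log-affine exponential and the translate of the log-convex $h_1$; integrating against the non-negative weight $g_{A_1}(y) f_2(y)\, dy$ then produces a log-convex function of $x$, with no further structural assumption on $f_2 \geq 0$.
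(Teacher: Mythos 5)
Your proof is correct. Part (\ref{it:part1}) and the ``in particular'' clause of part (\ref{it:part2}) are essentially the same arguments as the paper's: the former extracts the quadratic-in-$y$ Gaussian factor $g_{(A_1+A_2)/2}$ and notes the $h_i$ factors compose with affine maps, and the latter is verbatim the same H\"older argument (the paper writes $e^{\frac12\scalar{A_1 x,x}}(f_1\ast f_2)(x)=\int h_1(x-y)f_2(y)e^{\scalar{x,A_1y}-\frac12\scalar{A_1y,y}}\,dy$, which is exactly your displayed integral). Where you genuinely diverge is the main claim of part (\ref{it:part2}): the paper simply cites Brascamp--Lieb (Theorem 4.3 of their 1976 Pr\'ekopa--Leindler paper, with a survey pointer for the log-concave case), whereas you reprove it from scratch by completing the square in the convolution kernel to reach $(f_1\ast f_2)(x)=g_C(x)H(x)$ and then applying Pr\'ekopa--Leindler for the log-concave branch and pointwise-log-convexity-plus-H\"older for the log-convex branch. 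This is a net gain in transparency: it makes the statement self-contained, it exhibits the matrix identity $C=(A_1^{-1}+A_2^{-1})^{-1}=A_1(A_1+A_2)^{-1}A_2$ that the citation conceals, and it isolates the fact that the log-convex direction is actually more elementary than the log-concave one (requiring only H\"older, not Pr\'ekopa), which the paper's citation-based phrasing obscures. The cost is length; the paper's version is shorter by delegation.
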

\noindent
Note that Lemma \ref{lem:silly} (\ref{it:part2}) verifies that if $\f \in \F_{\A,\B}$ then $\Conv \f \in \F_{\A,\B}$. 
For $A_i = \lambda_i \Id_n$, $\lambda_i > 0$, Lemma \ref{lem:silly} is proved in \cite[Proposition 2.4 and Lemma 6.2]{NakamuraTsuji-InverseBrascampLieb}. 
\begin{proof}
If $f_i$ is more log-concave (respectively, more log-convex) than $g_{A_i}$, then after translation and scaling, clearly $f_i(\frac{x \pm \cdot}{\sqrt{2}})$ is more log-concave (respectively, more log-convex) than $g_{A_i/2}$, and hence their product is more log-concave (respectively, more log-convex) than $g_{(A_1+A_2)/2}$, establishing part (\ref{it:part1}). 

Part (\ref{it:part2}) is an immediate consequence of a result of Brascamp--Lieb \cite[Theorem 4.3]{BrascampLiebPLandLambda1}, see \cite[Theorem 3.7(b)]{StatisticianSurveyOnStrongLogConcavity} for an explicit proof of the log-concave statement. 
The ``in particular" part follows formally by taking $A_2 = \Lambda \cdot \Id_n$ and sending $\Lambda \nearrow +\infty$. To verify this rigorously, if $f_1 = g_{A_1} h_1$ with $h_1$ log-convex then:
\[
e^{\frac{1}{2} \scalar{A_1 x,x}} (f_1 \ast f_2)(x) = \int_{\R^n} h_1(x-y) f_2(y) e^{\scalar{x, A_1 y} -\frac{1}{2} \scalar{A_1 y,y}} dy .
\]
For each $y$, the integrand is log-convex in $x$, and so by H\"older's inequality the integral is also log-convex in $x$, concluding the proof. 
\end{proof}

Let us now go over the steps in \cite{NakamuraTsuji-InverseBrascampLieb} and check the required modifications. Step (\ref{it:step1}) is proved in \cite[Theorem 2.1]{NakamuraTsuji-InverseBrascampLieb} for $\A = \lambda \Id$ and $\B = \Lambda \Id$ with $0 < \lambda \leq \Lambda < \infty$, and the proof, based on compactness by means of the Arzel\`a--Ascoli theorem, immediately extends to general $0 < \A \leq \B$. 

Step (\ref{it:step2}) is proved in \cite[Proposition 2.4]{NakamuraTsuji-InverseBrascampLieb} for $\A = \lambda \Id$ and $\B = \Lambda \Id$ with $0 < \lambda \leq \Lambda < \infty$. Let us verify the extension to general $0 < \A \leq \B$. Indeed, if $\f \in \F_{\A,\B}$, we may assume without loss of generality that $\int_{\R^{n_i}} f_i(x_i) dx_i = 1$. Set
\[
F(x) := e^{-\frac{1}{2} \scalar{Q x ,x}} \Pi_{i=1}^m f_i(x_i)^{c_i} ,
\]
and compute using the change of variables $u = \frac{x+y}{\sqrt{2}}$ and $v = \frac{x-y}{\sqrt{2}}$:
\begin{align*}
& \BL(\f)^2 = \brac{\int_{\R^N} F(x) dx}^2 = \int_{\R^N} \int_{\R^N} F(u) F(v) du dv  \\
& = \int_{\R^N} \int_{\R^N} e^{-\frac{1}{2} \scalar{Q u,u}}  e^{-\frac{1}{2} \scalar{Q v,v}} \Pi_{i=1}^m \brac{f_i(u_i) f_i(v_i)}^{c_i} du dv \\
& = \int_{\R^N} e^{-\frac{1}{2} \scalar{Q x,x}} \int_{\R^N} e^{-\frac{1}{2} \scalar{Q y,y}} \Pi_{i=1}^m \brac{f_i\brac{\frac{x_i+y_i}{\sqrt{2}}} f_i\brac{\frac{x_i-y_i}{\sqrt{2}}}}^{c_i} dy dx .
\end{align*}
By Lemma \ref{lem:silly} (\ref{it:part1}), for each $x \in \R^{N}$, $\f_x := \brac{f_i\brac{\frac{x_i+\cdot}{\sqrt{2}}} f_i\brac{\frac{x_i-\cdot}{\sqrt{2}}}}_{i=1,\ldots,m}$ belongs to $\F_{\A,\B}$, and so we may proceed as follows:
\begin{align*}
& \geq I_{\A,\B} \int_{\R^N} e^{-\frac{1}{2} \scalar{Q x,x}} \Pi_{i=1}^m \brac{\int_{\R^{n_i}}  f_i\brac{\frac{x_i+y_i}{\sqrt{2}}} f_i\brac{\frac{x_i-y_i}{\sqrt{2}}} dy_i}^{c_i} dx \\
& = I_{\A,\B} \int_{\R^N} e^{-\frac{1}{2} \scalar{Q x,x}}  \Pi_{i=1}^m \Conv f_i(x_i)^{c_i} dx = \I_{\A,\B} \, \BL(\Conv \f) ,
\end{align*}
where the first equality follows by definition of $\Conv f_i$ and a change of variables, and the second since $\Conv f_i$ all integrate to $1$. 

Step (\ref{it:step3}) is proved in \cite[Theorem 2.3]{NakamuraTsuji-InverseBrascampLieb} for $\A = \lambda \Id$ and $\B = \Lambda \Id$ with $0 < \lambda \leq \Lambda < \infty$. Let us verify the extension to general $0 < \A \leq \B$. Indeed, the infimum in the definition of $I_{\A,\B}$ is attained on some minimizing $\f \in \F_{\A,\B}$ by step (\ref{it:step1}). Hence, by step (\ref{it:step2}),
\[
\I_{\A,\B}^2 = \BL(\f)^2 \geq \I_{\A,\B} \BL(\Conv \f) .
\]
By Lemma \ref{lem:silly} (\ref{it:part2}), $\Conv \f \in \F_{\A,\B}$, and hence
\[
\I_{\A,\B} \geq \BL(\Conv \f) \geq \I_{\A,\B} ,
\]
so $\Conv \f$ is also a minimizer. Iterating this, we deduce that $\I_{\A,\B} = \BL(\Conv^k \f)$ with $\Conv^k \f \in \F_{\A,\B}$ for all $k \geq 1$. Invoking the Central Limit Theorem as in \cite{NakamuraTsuji-InverseBrascampLieb}, it follows that each  $\Conv^k f_i$ converges in $L^1(\R^{n_i})$ and almost everywhere as $k \rightarrow \infty$ to a centered Gaussian $\frac{1}{Z_i} g_{C_i} \in \F_{\A,\B}$. By Fatou's lemma, denoting $\g = (g_{C_1},\ldots,g_{C_m})$, 
\[
\I_{\A,\B} = \lim_{k \rightarrow \infty} \BL(\Conv^k \f) \geq \BL(\g) \geq \I^\G_{\A,\B}. 
\]
Since the converse inequality is trivial, we verify that $\I_{\A,\B} = \I^\G_{\A,\B}$. 

Step (\ref{it:step4}) is proved in \cite[Lemma 2.7]{NakamuraTsuji-InverseBrascampLieb} for $\Q = \lambda_0 \Id$, $\lambda_0 \geq 0$. Let us verify the extension to general $\Q \geq 0$. Given $\f \in \F_\Q$, the idea is to define
\[
 \f_{\lambda,\Lambda} := ((f_i g_{2 \lambda \Id_{n_i}}) \ast g_{\Lambda \Id_{n_i}})_{i=1,\ldots,m}.
\]
Note that $f_i g_{2 \lambda \Id_{n_i}}$ is more log-concave than $g_{Q_i + 2 \lambda \Id_{n_i}}$, and so by Lemma \ref{lem:silly} (\ref{it:part2}), by choosing $\Lambda$ large enough (in a manner depending only on $\Q$ and $\lambda$), $(f_i g_{2 \lambda \Id_{n_i}}) \ast g_{\Lambda \Id_{n_i}}$ is more log-concave than $g_{Q_i + \lambda \Id_{n_i}}$, and in addition more log-convex than $g_{\Lambda \Id_{n_i}}$. Consequently, $\f_{\lambda, \Lambda} \in \F_{\Q + \lambda \Id , \Lambda \Id}$, from whence the argument in \cite{NakamuraTsuji-InverseBrascampLieb} may be repeated verbatim. 

\smallskip

Combining steps (\ref{it:step3}) and (\ref{it:step4}), we conclude that:
\[
I_\Q = \lim_{\lambda \rightarrow 0^+} \lim_{\Lambda \rightarrow \infty} I_{\Q + \lambda \Id, \Lambda \Id} = \lim_{\lambda \rightarrow 0^+} \lim_{\Lambda \rightarrow \infty} I^\G_{\Q + \lambda \Id , \Lambda \Id} \geq I^\G_{\Q} . 
\]
But since $I^\G_{\Q} \geq I_\Q$ we must have equality, concluding the proof of Theorem \ref{thm:intro-IBL2}.

\section{Comparison with Royen's approach} \label{sec:compare}

Recall that $X$ in Theorem \ref{thm:intro-GCI} denotes a centered Gaussian random vector in $\R^N$, whose (positive-definite) covariance matrix $\Sigma$ is written in block form as 
\begin{equation} \label{eq:Sigma-block}
\Sigma = \begin{pmatrix} \Sigma_1 & \Sigma_o \\ \Sigma_o^* & \Sigma_2 \end{pmatrix} 
\end{equation}
with $\Sigma_i = P_i \Sigma P_i^*$, $i=1,2$, where $P_i x = x_i$ for $x = (x_1,x_2) \in \R^{n_1} \oplus \R^{n_2} = \R^N$.

\smallskip
Royen's proof that $\E \textbf{1}_{K_1}(P_1 X) \textbf{1}_{K_2}(P_2 X) \geq \E \textbf{1}_{K_1}(P_1 X) \E \textbf{1}_{K_2}(P_2 X)$ 
seems to be rather specific to the case that $K_i$ are cubes (or rectangles), but on the other hand applies to the case when $(X_i^2)_{i=1,\ldots,N}$ is distributed according to more general Gamma distributions. On the other hand, our proof remains unchanged if one replaces $\mathbf{1}_{K_i}$ by any two other even log-concave functions $f_i : \R^{n_i} \rightarrow \R_+$, but is restricted to the Gaussian case (and of course {\it a posteriori}, the correlation inequality ultimately holds for any two even quasi-concave functions $f_i$). Let us now take a more in depth look at the advantages of each approach.

\subsection{The equality case in Theorem \ref{thm:intro-GCI}}

In \cite{Royen-GaussianCorrelation}, Royen in fact showed that equality occurs in Theorem \ref{thm:intro-GCI} if and only if $ (X_1,\ldots,X_{n_1})$ and $(X_{n_1+1},\ldots,X_N)$ are independent, or equivalently, if and only if $\Sigma_o = 0$. Let us give an alternative proof of this following our approach. 

\begin{proposition}
Equality occurs in Theorem \ref{thm:intro-GCI} if and only if $\Sigma_o = 0$. 
\end{proposition}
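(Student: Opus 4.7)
The ``if'' direction is immediate: $\Sigma_o = 0$ makes $P_1 X$ and $P_2 X$ independent, so the joint probability of the cube factors exactly. For the ``only if'' direction, my plan is to extract additional information from the chain (\ref{eq:equality})--(\ref{eq:inf}) appearing in the proof of Theorem \ref{thm:intro-GCI}. Equality there forces the ``$\geq$'' in (\ref{eq:equality}) to be an equality, i.e., the pair $\f := (g_{\Sigma_1^{-1}} \mathbf{1}_{K_1}, g_{\Sigma_2^{-1}} \mathbf{1}_{K_2})$ attains the infimum in Theorem \ref{thm:intro-IBL2} for the parameters $Q = \Sigma^{-1} - \Sigma_1^{-1} \oplus \Sigma_2^{-1}$ and $Q_i = \Sigma_i^{-1}$.

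I would then rerun the proof of Theorem \ref{thm:intro-IBL2}. Iterating step (\ref{it:step2}) shows that each $\Conv^k \f$ is also a minimizer of $\BL$. Since $f_i / \int f_i$ is precisely the density of the centered Gaussian $N(0, \Sigma_i)$ truncated to $K_i$---with covariance $\Lambda_i$, say---the Central Limit Theorem forces $\Conv^k f_i \to g_{\Lambda_i^{-1}} / Z_i'$ both in $L^1$ and almost everywhere, where $Z_i' = (2\pi)^{n_i/2}\det(\Lambda_i)^{1/2}$. The Fatou argument of step (\ref{it:step3}) then promotes $\g := (g_{\Lambda_1^{-1}}, g_{\Lambda_2^{-1}}) \in \G_\Q$ (membership via the weak Brascamp--Lieb bound $\Lambda_i \leq \Sigma_i$, equivalently $\Lambda_i^{-1} \geq \Sigma_i^{-1} = Q_i$) to a minimizer of $\BL$ in its own right.

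Unfolding the explicit Gaussian formula from (\ref{eq:inf}) with the substitution $A_i := \Lambda_i^{-1} - \Sigma_i^{-1}$, the minimality of $\g$ reduces to equality in Fischer's inequality (\ref{eq:Fischer}) for the matrix (\ref{eq:Fischer-matrix}); by Lemma \ref{lem:Fischer}, this forces the off-diagonal block $\sqrt{A_1} \Sigma_o \sqrt{A_2}$ to vanish. The main obstacle is then to verify $A_1, A_2 > 0$ strictly, equivalently $\Lambda_i < \Sigma_i$ in the Loewner order, so that $\sqrt{A_i}$ is invertible and hence $\Sigma_o = 0$. To establish this strict reduction of covariance under truncation, I fix $v \neq 0$, set $U := \scalar{v, X}$, and consider $\psi(t) := \P(X \in K_i \mid U = t)$; $\psi$ is even by symmetry of $K_i$, and non-increasing in $|t|$ by Anderson's inequality (it is the Gaussian mass that a shifting mean puts on the symmetric convex body $K_i$). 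Chebyshev's sum inequality applied to $\psi$ and $t \mapsto t^2$---oppositely monotone in $|t|$---under the even 1D Gaussian density of $U \sim N(0, \scalar{v, \Sigma_i v})$ then yields
\[
\Var(U \mid X \in K_i) = \frac{\E[U^2 \psi(U)]}{\E[\psi(U)]} < \E[U^2] = \scalar{v, \Sigma_i v},
\]
with strictness since $\psi(t) \to 0$ as $|t| \to \infty$ and $t^2$ is non-constant. As this holds for every $v \neq 0$, we conclude $A_1, A_2 > 0$ strictly and the proof is complete.
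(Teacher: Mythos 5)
Your proof is correct and follows essentially the same route as the paper: extract a centered Gaussian extremizer with the same covariance as the truncated Gaussian via the self-convolution/CLT mechanism of step (\ref{it:step3}), then invoke the equality case of Fischer's inequality (Lemma \ref{lem:Fischer}) on (\ref{eq:Fischer-matrix}). The one place you go beyond the paper is in supplying a careful justification---via Anderson's inequality and a Chebyshev monotone-rearrangement argument---for the strict inequality $\Lambda_i < \Sigma_i$ (equivalently $A_i > 0$), a step the paper asserts without elaboration; this is a correct and welcome filling-in of that detail.
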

\begin{proof}
The ``if" direction is trivial, so assume that we have equality in Theorem \ref{thm:intro-GCI}. This means that the left- and right-hand sides of (\ref{eq:equality}) coincide and are equal to $1$, and so employing the notation used in Section \ref{sec:IBL}, this means that $f_i = \textbf{1}_{[-1,1]^{n_i}} g_{\Sigma_i^{-1}}$ is an extremizer of $\BL(\f)$ over $f \in \F_{\Q}$ with $\Q = (\Sigma_1^{-1},\Sigma_2^{-1})$. By step (\ref{it:step3}) from Section \ref{sec:IBL}, there also exist centered Gaussian extremizers $\g = (g_1,g_2) \in \G_{\Q}$ with $\BL(\g) = \BL(\f) = 1$. Since the covariance of $f_i$ does not change under each iteration of the self-convolution operator $\Conv^{k} f_i$, this is also preserved in the limit as $k \rightarrow \infty$ and hence $g_i$ and $f_i$ have the same covariance matrices. Consequently, $g_i = g_{\Sigma_i^{-1} + A_i}$ for some positive-definite $A_i > 0$, $i=1,2$ (as opposed to merely positive semi-definite). Since the ratio in (\ref{eq:inf}) is equal to $1$ for these particular $A_i$, we have equality in Fischer's inequality, and we deduce from Lemma \ref{lem:Fischer} and (\ref{eq:Fischer-matrix}) that $\sqrt{A_1} \Sigma_o \sqrt{A_2} = 0$. But since $A_i$ are positive-definite, we conclude that $\Sigma_o =0$, as asserted. 
\end{proof}

\subsection{Original argument}

Our original proof that the infimum in (\ref{eq:inf0}) is equal to $1$ proceeded by rewriting (\ref{eq:inf0}) as:
\begin{equation} \label{eq:inf2}
 \inf_{A_1,A_2 \geq 0} \brac{\frac{\det(\Id_{N} +  (A_1 \oplus A_2) (\Sigma_1 \oplus \Sigma_2))}{ \det (\Id_{N} + (A_1 \oplus A_2) \Sigma)}}^{\frac{1}{2}} ,
\end{equation}
and employing the following:

\begin{lemma} \label{lem:monotone}
Let $\Sigma$ be a $N \times N$ positive-definite matrix, given in block form by (\ref{eq:Sigma-block}). 
Denote
\begin{equation} \label{eq:Sigmat}
 \Sigma^{(s)} := (1-s) (\Sigma_1 \oplus \Sigma_2) + s \Sigma = \begin{pmatrix} \Sigma_1 & s \Sigma_o \\ s \Sigma_o^* & \Sigma_2 \end{pmatrix} ~,~ s \in [0,1] . 
\end{equation}
 Then for all $n_i \times n_i$ positive semi-definite matrices $A_i \geq 0$, $i=1,2$, the function $[0,1] \ni s \mapsto \det(\Id_N + (A_1 \oplus A_2) \Sigma^{(s)})$ is monotone non-increasing. In particular, the infimum in (\ref{eq:inf2}) is $1$. 
\end{lemma}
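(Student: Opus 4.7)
The plan is to reduce the problem to a Schur complement computation in which the dependence on $s$ appears only through $s^2$, and then read off the monotonicity by a straightforward derivative calculation. First, using the identity $\det(\Id_n + BC) = \det(\Id_n + CB)$ as in the passage from (\ref{eq:inf0}) to (\ref{eq:inf}), I would rewrite
\[
D(s) := \det(\Id_N + (A_1 \oplus A_2)\Sigma^{(s)}) = \det\brac{\Id_N + \sqrt{A_1 \oplus A_2}\, \Sigma^{(s)}\, \sqrt{A_1 \oplus A_2}} = \det \begin{pmatrix} M_1 & s M_o \\ s M_o^* & M_2 \end{pmatrix},
\]
where $M_i := \Id_{n_i} + \sqrt{A_i}\,\Sigma_i\, \sqrt{A_i} \geq \Id_{n_i} > 0$ and $M_o := \sqrt{A_1}\, \Sigma_o\, \sqrt{A_2}$. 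Applying the block-matrix (Schur complement) formula then gives $D(s) = \det(M_1)\,\det(M_2 - s^2 N)$ with $N := M_o^* M_1^{-1} M_o \geq 0$.

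At $s = 1$ the full matrix coincides with $\Id_N + \sqrt{A_1 \oplus A_2}\, \Sigma\, \sqrt{A_1 \oplus A_2} \geq \Id_N > 0$ (since $\Sigma > 0$), so its Schur complement $M_2 - N$ is positive-definite; consequently $M_2 - t N = (1-t) M_2 + t (M_2 - N) > 0$ for all $t \in [0,1]$ as a convex combination of positive-definite matrices. Jacobi's formula therefore yields
\[
\frac{d}{dt} \det(M_2 - t N) \;=\; -\det(M_2 - t N)\, \text{tr}\brac{(M_2 - t N)^{-1} N} \;\leq\; 0,
\]
using that $(M_2 - t N)^{-1} > 0$ and $N \geq 0$. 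Hence $t \mapsto \det(M_2 - t N)$ is non-increasing on $[0,1]$, and so is $s \mapsto D(s)$ via the substitution $t = s^2$ (which is non-decreasing on $[0,1]$).

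For the ``in particular'' assertion, the expression inside (\ref{eq:inf2}) equals $\sqrt{D(0)/D(1)} \geq 1$ by the monotonicity just established, while taking $A_1 = A_2 = 0$ gives $D(0) = D(1) = 1$ trivially; the infimum is therefore attained and equal to $1$. I do not anticipate a genuine obstacle here: the crux is the observation that the Schur complement cleanly isolates the $s^2$-dependence of the determinant, after which monotonicity is automatic. The only item worth verifying carefully is the positive-definiteness of $M_2 - tN$ on the whole interval, and this is immediate from the convex-combination remark above together with the Schur complement at $s=1$ being positive-definite.
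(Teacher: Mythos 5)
Your proof is correct, and it takes a genuinely different route from the one in the paper. The paper (following Royen, via Lat\l{}a\l{}a--Matlak) first conjugates so that $A_1\oplus A_2$ is diagonal, then expands $\det(\Id_N + A\Sigma^{(s)}) = 1 + \sum_{\emptyset\neq J\subset[N]} |A_J|\,|\Sigma^{(s)}_J|$ over all principal minors, and shows that each $|\Sigma^{(s)}_J|$ is individually non-increasing in $s$ by applying a Schur complement to the minor $\Sigma^{(s)}_J$. Your argument instead passes to the symmetrized matrix $\Id_N + \sqrt{A_1\oplus A_2}\,\Sigma^{(s)}\,\sqrt{A_1\oplus A_2}$ and applies a single Schur complement to this full matrix, isolating the $s^2$-dependence globally as $\det(M_1)\det(M_2 - s^2 N)$ with $N\geq 0$, then reads off monotonicity from Jacobi's formula (or simply from monotonicity of $\det$ on the positive-definite cone). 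Your route is shorter and cleaner: it avoids both the diagonalization of $A_i$ and the minor expansion. The paper's route is the one that actually appears in Royen's original argument, where the minor expansion is needed because the determinant comes from a Laplace transform of the multivariate Gamma distribution and the coefficients $|A_J|$ must all be handled together; for the purely Gaussian statement at hand, that extra generality is not needed. One small stylistic point: the justification that $M_2 - N > 0$ comes from $\Sigma^{(1)}=\Sigma$ being such that $\Id_N + \sqrt{A_1\oplus A_2}\,\Sigma\,\sqrt{A_1\oplus A_2}\geq\Id_N>0$ because $\sqrt{A_1\oplus A_2}\,\Sigma\,\sqrt{A_1\oplus A_2}\geq 0$ (the ``$\geq$'' suffices; you do not need $\Sigma>0$ to make the middle factor positive-\emph{definite}, and indeed it need not be when $A_i$ is singular); this is exactly what you use, it is just worth phrasing precisely.
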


By choosing an appropriate orthonormal basis, one easily reduces to the case where $A_i$ are diagonal. In that case, Lemma \ref{lem:monotone} was established by Royen for his proof of Theorem \ref{thm:intro-GCI} in \cite{Royen-GaussianCorrelation}.
Curiously, in \cite{Royen-GaussianCorrelation}, the diagonal elements of $A_1 \oplus A_2$ represent the coefficients of the Laplace transform of the distribution of $(X_i^2/2)_{i=1,\ldots,N}$, whereas in our case, the $A_i^{-1}$'s correspond to the covariances of the Gaussians which potentially saturate the correlation inequality. For completeness, we reproduce the  detailed proof in \cite{LatalaMatlak-GaussianCorrelation}
following \cite{Royen-GaussianCorrelation} below. 

\begin{proof}[Proof of Lemma \ref{lem:monotone}]
Denote $[N] = \{1,\ldots,N\}$, and for $J \subset [N]$, denote by $B_J$ is the principal minor of an $N\times N$ matrix $B$ with rows and columns in $J$. Also denote $|B| = \det(B)$ for brevity. For any pair of orthogonal transformations $U_i \in O(n_i)$,
\[
|\Id_N + (A_1 \oplus A_2) \Sigma^{(s)}| = |\Id_N + (U_1^{-1} A_1 U_1 \oplus U_2^{-1} A_2 U_2) (U_1 \oplus U_2)^{-1} \Sigma^{(s)} (U_1 \oplus U_2) | .
\]
Choosing appropriate $U_i$ and redefining $A_i$, $\Sigma_i$ and $\Sigma$, we may assume that $A_i$ are diagonal. Denote $A := A_1 \oplus A_2$, a diagonal matrix. Then by \cite[Lemma 5 (i)]{LatalaMatlak-GaussianCorrelation}:
\[
|\Id_N + A \Sigma^{(s)}| = 1 + \sum_{\emptyset \neq J \subset [N]} |(A \Sigma^{(s)})_J| = 1 + \sum_{\emptyset \neq J \subset [N]} |A_J| |\Sigma^{(s)}_J| .
\]
Since $A \geq 0$ and hence $|A_J| \geq 0$, it is enough to show that $[0,1] \ni s \mapsto |\Sigma^{(s)}_J|$ is non-increasing for each $\emptyset \neq J \subset [N]$. Denote $J_1 = J \cap [n_1]$ and $J_2 = J \setminus [n_1]$. If $J_1 = \emptyset$ or $J_2 = \emptyset$ then $\Sigma^{(s)}_J \equiv \Sigma_J$ remains constant. Otherwise, write:
\[
\Sigma^{(s)}_J = \begin{pmatrix} \Sigma_{J_1} & s \Sigma_{J_1 J_2} \\ s \Sigma_{J_2 J_1} & \Sigma_{J_2} \end{pmatrix} ,
\]
and observe by \cite[Lemma 5 (ii)]{LatalaMatlak-GaussianCorrelation} that:
\[
|\Sigma^{(s)}_J| = |\Sigma_{J_1}| |\Sigma_{J_2}| |\Id_{J_1} - s^2 M| ~,~ M := \Sigma_{J_1}^{-1/2} \Sigma_{J_1 J_2} \Sigma_{J_2}^{-1} \Sigma_{J_2 J_1} \Sigma_{J_1}^{-1/2} ,
\]
with $0 \leq M \leq \Id_{J_1}$. It follows that $[0,1] \ni s \mapsto |\Sigma^{(s)}_J|$ is non-increasing, concluding the proof. 
\end{proof}

\subsection{Monotonicity} \label{subsec:monotonicity}

Royen actually showed that $[0,1] \ni s \mapsto \E \mathbf{1}_{[-1,1]^N}(X^{(s)})$ is monotone non-decreasing, where $X^{(s)}$ denotes a centered Gaussian random vector with covariance $\Sigma^{(s)}$ defined in (\ref{eq:Sigmat}). 
While our proof (employing Lemma \ref{lem:monotone} instead of Lemma \ref{lem:Fischer}) only shows that this is lower-bounded by a monotone non-decreasing function, it elucidates the reason for the correlation -- the worst case is given by $\E g_{A_1 \oplus A_2}(X^{(s)})$, which is explicitly computable and turns out to be monotone in $s$. 

Royen's result is equivalent to the statement that $[0,1] \ni s \mapsto \E f_1(Y_s) f_2(Z_s)$ is monotone non-decreasing for any even quasi-concave functions $f_1,f_2 : \R^n \rightarrow \R_+$ and a family of centered Gaussian random vectors $(Y_s,Z_s)$ in $\R^{n} \times \R^n$ having covariance matrix
\begin{equation} \label{eq:form}
\begin{pmatrix} \Sigma_{YY} & s \Sigma_{YZ} \\ s \Sigma_{YZ}^* & \Sigma_{ZZ} \end{pmatrix} . 
\end{equation}
Indeed, by a standard layer-cake decomposition of $f_1$ and $f_2$ into their super-level sets, the claim reduces to the case when $f_i = \textbf{1}_{K_i}$ with $K_i$ an origin-symmetric convex set in $\R^n$. To see the monotonicity in that case, approximate $K_i$ by an origin-symmetric polytope $\cap_{j=1}^{n_i} \{ x \in \R^n : |\sscalar{x,a^i_j}| \leq 1\}$, and arrange $\{a^i_j\}_{j=1,\ldots,n_i}$ as the rows of an $n_i \times n$ matrix $A_i$. Treating all random vectors as column vectors, define the centered Gaussian random-vector $X^{(s)} = (A_1 Y_s + \eps G_1, A_2 Z_s + \eps G_2)$ in $\R^N$, where $N = n_1+n_2$, and $G_i$ is a standard Gaussian random vector in $\R^{n_i}$ which is independent of all other random vectors. 
It follows that the (non-degenerate) covariance matrix of $X^{(s)}$ is given by
\[
\begin{pmatrix} A_1 \Sigma_{YY} A_1^* + \eps^2 \Id_{n_1} & s A_1 \Sigma_{YZ} A_2^* \\ s A_2 \Sigma_{YZ}^* A_1^* & A_2 \Sigma_{ZZ} A_2^* + \eps^2 \Id_{n_2} \end{pmatrix} .
\]
Royen's result implies that $[0,1] \ni s \mapsto \E \mathbf{1}_{[-1,1]^N}(X^{(s)})$ is monotone, and the monotonicity is preserved in the limit after refining the approximation of the $K_i$'s and sending $\eps \rightarrow 0$, thereby establishing the monotonicity of $[0,1] \ni s \mapsto \E \mathbf{1}_{K_1}(Y_s) \mathbf{1}_{K_2}(Z_s)$.

It is worthwhile noting that this monotonicity has a useful consequence for the Ornstein--Uhlenbeck semi-group $P_t$ associated to the standard Gaussian measure $\gamma^n$ on $\R^n$, defined for (say) $f \in L^2(\gamma^n)$ as
\[
P_t f(y) := \E f (e^{-t} y + \sqrt{1-e^{-2t}} Z_0) ~,~ Z_0 \sim \gamma^n ~,~ t \in [0,\infty]~,~ y \in \R^n . 
\]
Note that $P_0 f = f$ and $P_\infty f \equiv \int f d\gamma^n$. Consequently, the Gaussian Correlation Inequality may be equivalently reformulated as stating that for any even quasi-concave functions $f_1,f_2 : \R^n \rightarrow \R_+$, one has:
\[
\int f_1 P_0 f_2  d\gamma^n \geq \int f_1 P_\infty f_2 d\gamma^n . 
\]
Royen's argument actually implies the stronger statement that $[0,\infty] \ni t \mapsto \int f_1 P_t f_2 d\gamma^n$ is monotone non-increasing; when $n=2$, this was previously established by Pitt \cite{Pitt-GaussianCorrelationInPlane}. To see this, note that
\[
\int f_1 P_t f_2 d\gamma^n = \E(f_1(Y) f_2 (Z_s))  ~ ,~ Z_s = s Y + \sqrt{1-s^2} Z_0 ~,~ s = e^{-t} ,
\]
where $Y$ and $Z_0$ are independent standard Gaussian random vectors in $\R^n$. Consequently, the covariance matrix of the random vector $(Y,Z_s)$ in $\R^{n} \times \R^n$ is of the form (\ref{eq:form}) with $\Sigma_{YY} = \Sigma_{YZ} = \Sigma_{ZZ} = \Id_n$, and the monotonicity follows. 

In contrast, we do not know how to obtain the full monotonicity of $P_t$ using the inverse Brascamp--Lieb approach.

\subsection{Subsequent developments}

After posting and submitting our work, Nakamura and Tsuji noted that there is actually no need to assume that the functions $f_i = g_{Q_i} h_i$ in Theorem \ref{thm:intro-IBL2} are even, and that in order to run the Central Limit argument in step (\ref{it:step3}) of the proof in Section \ref{sec:IBL}, it is only necessary that their barycenter is at the origin. After treating various additional technicalities which arise, they were able to show in \cite{NakamuraTsuji-GCIForCentered} that:
\begin{equation} \label{eq:final}
\gamma^n(K \cap L) \geq \gamma^n(K) \gamma^n(L) 
\end{equation}
for all convex $K,L \subset \R^n$ whose Gaussian barycenter is at the origin (``centered"), including a characterization of the equality case. In the origin-symmetric case, a characterization of the equality case can also be deduced from a stability version of (\ref{eq:final}) obtained in \cite[Theorem 21 and Proposition 23]{DNS-StabilityOfCorrelationInqs-ConferenceVer}. 
We refer to \cite{NakamuraTsuji-GCIForCentered} for further extensions of (\ref{eq:final}) to the case when the Gaussian barycenters of $K$ and $L$ coincide (but differ from $0$), and for a correlation result for more than two centered convex sets $K_1,\ldots,K_m$. 

In contrast, it seems that Royen's approach is inherently restricted to origin-symmetric sets and even functions, because the first step in his proof is to consider the squared random vector $(X_i^2)_i$, and its distribution can only capture information on the sigma-algebra generated by events of the form $X_i \in [-r_i,r_i]$.

%\bibliographystyle{plain}
%\bibliography{../ConvexBib}

\def\cprime{$'$} \def\textasciitilde{$\sim$}

\end{document}